\let\csname ver@amsthm.sty\endcsname\relax
\numberwithin{equation}{section}
\newtheorem{thm}{Theorem}[section]
\newtheorem{lemma}[thm]{Lemma}
\newtheorem{cor}[thm]{Corollary}
\newtheorem{prop}[thm]{Proposition}
\newtheorem{Example}[thm]{Example}
\newenvironment{example}
  {\begin{Example}\rm}{\end{Example}}
\newtheorem{Remark}[thm]{Remark}
\newenvironment{remark}
  {\begin{Remark}\rm}{\end{Remark}}
\crefname{thm}{Theorem}{Theorems}
\crefname{lemma}{Lemma}{Lemmas}
\crefname{cor}{Corollary}{Corollaries}
\crefname{prop}{Proposition}{Propositions}
\crefname{example}{Example}{Examples}
\crefname{remark}{Remark}{Remarks}
\newcommand{\dfn}[1]{\textcolor{blue}{\emph{#1}}}
\title[Restricted Birkhoff polytopes and Ehrhart period collapse]{Restricted Birkhoff polytopes \\ and Ehrhart period collapse}
\author{Per Alexandersson}
\email{per.w.alexandersson@gmail.com}
\address{Department of Mathematics, Stockholm University, SE-106 91 Stockholm, Sweden}
\author{Sam Hopkins}
\email{samuelfhopkins@gmail.com}
\address{Department of Mathematics, Howard University, Washington, DC, USA}
\author{Gjergji Zaimi}
\email{gjergjiz@gmail.com}
\address{Palo Alto, California, USA}
\keywords{Ehrhart polynomial, period collapse, Birkhoff polytope, Gelfand--Tsetlin polytope, order and chain polytopes, RSK correspondence}
\subjclass[2020]{52B20, 52B05, 06A11, 05E05}
\begin{document}

\begin{abstract}
We show that the polytopes obtained from the Birkhoff polytope by imposing additional inequalities restricting the ``longest increasing subsequence'' have Ehrhart quasi-polynomials which are honest polynomials, even though they are just rational polytopes in general. We do this by defining a continuous, piecewise-linear bijection to a certain Gelfand--Tsetlin polytope. This bijection is not an integral equivalence but it respects lattice points in the appropriate way to imply that the two polytopes have the same Ehrhart (quasi-)polynomials. In fact, the bijection is essentially the Robinson--Schensted--Knuth correspondence.
\end{abstract}

\maketitle

\section{Introduction and statement of the main result} \label{sec:intro}

Let $\mathcal{P}$ be a $d$-dimensional rational convex\footnote{From now on, all polytopes are assumed convex and we will drop this adjective.} polytope in $\mathbb{R}^n$. A celebrated theorem of Ehrhart~\cite{ehrhart1962sur} says that the function
\[ L(\mathcal{P};t) \coloneqq \#(t\mathcal{P}\cap\mathbb{Z}^n)\]
which counts the number of lattice points in the $t$th dilate of $\mathcal{P}$, for integers $t \geq 1$, is a quasi-polynomial in $t$ of degree $d$. Here by \dfn{quasi-polynomial} we mean a polynomial with periodic coefficients. That is, there is some $p$ for which we can write
\[ L(\mathcal{P};t) = c_d(t) t^d + c_{d-1}(t)t^{d-1} + \cdots + c_1(t)t + c_0(t)\]
where the coefficients $c_i(t)$ satisfy $c_i(t) = c_i(t+p)$ for all $t$. The \dfn{period} of the quasi-polynomial is the smallest such $p$ we can choose. The quasi-polynomial $L(\mathcal{P};t)$ is called the \dfn{Ehrhart quasi-polynomial} of $\mathcal{P}$ and it is known that the period of this quasi-polynomial divides the smallest integer $m\geq 1$ such that $m\mathcal{P}$ is integral. For example, if~$\mathcal{P}$ is already an integral polytope (so $m=1$), then $L(\mathcal{P};t)$ is an honest polynomial called the \dfn{Ehrhart polynomial} of $\mathcal{P}$. For more basics about polytopes and Ehrhart theory, we refer the reader to~\cite{beck2015computing}.

Notice how we said that the period of $L(\mathcal{P};t)$ \emph{divides} the smallest $m$ such that~$m\mathcal{P}$ is integral: generically it is equal to this $m$, but for special polytopes it can be smaller. When $\mathcal{P}$ has a smaller Ehrhart quasi-polynomial period than what one would predict by looking at the denominators of its vertices, we say that~$\mathcal{P}$ exhibits \dfn{Ehrhart period collapse}. In the most extreme case, the Ehrhart quasi-polynomial of~$\mathcal{P}$ may be an honest polynomial even when $\mathcal{P}$ is not integral.

Ehrhart period collapse has received a significant amount of research attention (see, e.g.,~\cite{beck2008maximal, haase2008quasiperiod, mcallister2017ehrhart, mcallister2005minimum}) but is still poorly understood in general. For instance, in~\cite{haase2008quasiperiod} it is conjectured that period collapse for a rational polytope can always be explained by cutting the polytope into simplices and rearranging them in the appropriate way to form another polytope with the ``correct'' denominators for its vertices. But this conjecture remains open.

In the absence of a general explanation for the phenomenon, there has been interest in studying particular instances of Ehrhart period collapse, especially for families of polytopes with important connections to combinatorics, geometry, or algebra~\cite{cristofarogardiner2019irrational, fernandes2022period, johnson2022piecewise}. Indeed, it is a curious fact that many prominent examples of polytopes exhibiting Ehrhart period collapse come from algebra. The \dfn{Gelfand--Tsetlin polytope}~$\mathcal{GT}_{\lambda,\mu}$, which depends on a choice of partitions $\lambda$ and $\mu$, consists of certain triangular arrays of numbers called Gelfand--Tsetlin patterns. The number of integral Gelfand--Tsetlin patterns (i.e., the number of lattice points in~$\mathcal{GT}_{\lambda,\mu}$) is the dimension of the $\mu$ weight space in the irreducible general linear group representation indexed by $\lambda$. This representation theory interpretation implies that $L(\mathcal{GT}_{\lambda,\mu};t)$ is a polynomial~\cite{billey2004vector, kirillov2001ubiquity}, even though~$\mathcal{GT}_{\lambda,\mu}$ is not in general integral~\cite{deloera2004vertices, king2004stretched}. In other words, $\mathcal{GT}_{\lambda,\mu}$ exhibits Ehrhart period collapse.

\medskip

In this article we describe a new family of polytopes exhibiting Ehrhart period collapse. These new examples are attractive because they are built from the famous Birkhoff polytope of doubly stochastic matrices.

Recall that the \dfn{Birkhoff polytope} $\mathcal{B}_n$ is the set of all $n\times n$ $\mathbb{R}$-matrices \[X=\begin{pmatrix} x_{1,1} & x_{1,2} & \cdots & x_{1,n} \\
x_{2,1} & x_{2,2} & \ddots & \vdots \\
\vdots & \ddots & \ddots & \vdots \\
x_{n,1} & \cdots & \cdots & x_{n,n}
\end{pmatrix}\] 
which satisfy the inequalities 
\[x_{i,j} \geq 0 \textrm{ \, for all $i,j$ \; (\emph{nonnegative entries})},\]
and equalities
\begin{align*}
x_{i,1} + x_{i,2} + \cdots + x_{i,n} &= 1 \textrm{ \, for all $i$ \; \textrm(\emph{row sums equal one})}, \\
x_{1,j} + x_{2,j} + \cdots + x_{n,j} &= 1 \textrm{ \, for all $j$ \; (\emph{column sums equal one})}.
\end{align*}
It is known that $\mathcal{B}_n$ is a $(n^2-2n+1)$-dimensional polytope inside of $\mathbb{R}^{n\times n}$. Moreover, the Birkhoff--von Neumann theorem asserts that the vertices of $\mathcal{B}_n$ are precisely the permutation matrices $X_w$ for permutations $w$ in the symmetric group $\mathfrak{S}_n$ (see, e.g., \cite[Example~0.12]{ziegler1995lectures}).

\begin{remark}
The Ehrhart polynomial of $\mathcal{B}_n$ was studied by Beck and Pixton~\cite{beck2003ehrhart} from a computational point of view, but not much is known beyond small values of~$n$. Indeed, even computing the \emph{volume} of~$\mathcal{B}_n$ is a very difficult problem~\cite{canfield2009asymptotic, chan1999volume, deloera2009generating, 
 pak2000four}. 
\end{remark}

Since $\mathcal{B}_n$ is an integral polytope, we will need to impose further inequalities if we want to find an example of Ehrhart period collapse. Therefore, for $k=1,2,\ldots,n$ we define the \dfn{restricted Birkhoff polytope} $\mathcal{B}^k_n$ to be the set of $X = (x_{i,j}) \in \mathcal{B}_n$ which satisfy the additional inequalities
\[ x_{i_1,j_2} + x_{i_2,j_2} + \cdots + x_{i_{2n-1},j_{2n-1}} \leq k \]
for all sequences $(1,1)=(i_1,j_1) \lessdot (i_2,j_2) \lessdot \cdots \lessdot (i_{2n-1},j_{2n-1})=(n,n)$, where we write $(i,j) \lessdot (i',j')$ to mean either $(i',j')=(i+1,j)$ or $(i',j')=(i,j+1)$. In other words, the sum of entries in any chain from the upper-left to lower-right corners must be at most $k$. These are $\binom{2n-2}{n-1}$ additional inequalities. 

Observe that $\mathcal{B}^n_n = \mathcal{B}_n$ is the full Birkhoff polytope, while $\mathcal{B}^1_n=\{X_{w_0}\}$ where $w_0 = n,n-1,\ldots,2,1 \in \mathfrak{S}_n$ is the reverse permutation. For other values of~$k$ we get something in-between the Birkhoff polytope~$\mathcal{B}_n$ and the single point~$X_{w_0}$.

A moment's thought will reveal that the vertices of $\mathcal{B}_n$ which belong to $\mathcal{B}^k_n$ are precisely $X_w$ for $w\in\mathfrak{S}_n$ whose longest increasing subsequence has length at most~$k$. In the language of permutation pattern avoidance, these are the permutations which are \dfn{$12\ldots (k+1)$-avoiding}. For example, in the case $k=2$ these are the $123$-avoiding permutations, and it is a classic result that there are Catalan number many of them. However, except in the cases $k=1$ and $k=n$, there are other vertices of $\mathcal{B}^k_n$ which are \emph{not} permutation matrices, that is, which are not integral.\footnote{Thus, compare to the recent work of Davis and Sagan~\cite{davis2018pattern} which studies the subpolytopes of the Birkhoff polytope determined by pattern-avoiding permutations.}

\begin{example} \label{ex:b32}
There are $6$ vertices of $\mathcal{B}_3^2$. They are the following:
\[\begin{pmatrix}1 & 0 & 0 \\ 0 & 0 & 1 \\ 0 & 1 & 0 \end{pmatrix}, \, \begin{pmatrix}0 & 1 & 0 \\ 1 & 0 & 0 \\ 0 & 0 & 1 \end{pmatrix}, \, \begin{pmatrix}0 & 1 & 0 \\ 0 & 0 & 1 \\ 1 & 0 & 0 \end{pmatrix}, \, \begin{pmatrix}0 & 0 & 1 \\ 1 & 0 & 0 \\ 0 & 1 & 0 \end{pmatrix}, \, \begin{pmatrix}0 & 0 & 1 \\ 0 & 1 & 0 \\ 1 & 0 & 0 \end{pmatrix}, \, \begin{pmatrix}\frac{1}{2} & 0 & \frac{1}{2} \\ 0 & 1 & 0 \\ \frac{1}{2} & 0 & \frac{1}{2} \end{pmatrix}\]
The first $5$ are the $123$-avoiding permutations in $\mathfrak{S}_3$, but the last is not integral.
\end{example}

Thus, $\mathcal{B}_n^k$ is just a rational polytope in general. Our main result is the following:

\begin{thm} \label{thm:main}
For any $1\leq k \leq n$, the Ehrhart quasi-polynomial $L(\mathcal{B}_n^k;t)$ is actually a polynomial. In other words, $\mathcal{B}_n^k$ exhibits Ehrhart period collapse.
\end{thm}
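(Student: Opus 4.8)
The engine of the proof will be a general principle about piecewise-linear maps: if $\Phi$ is continuous, piecewise-linear, and commutes with dilation (so $\Phi(tx)=t\,\Phi(x)$ for all $t\ge 0$), and if $\Phi$ restricts to a bijection $\mathcal{B}_n^k\to\mathcal{Q}$ onto some polytope $\mathcal{Q}$ which for every integer $t\ge 1$ carries $(t\mathcal{B}_n^k)\cap\mathbb{Z}^{n\times n}$ bijectively onto $(t\mathcal{Q})\cap\mathbb{Z}^N$, then $L(\mathcal{B}_n^k;t)=L(\mathcal{Q};t)$ for all $t$, so the two polytopes share an Ehrhart quasi-polynomial. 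The plan is to produce such a $\Phi$ whose target $\mathcal{Q}$ is a Gelfand--Tsetlin polytope; since the Ehrhart function of a Gelfand--Tsetlin polytope is an honest polynomial (by the representation-theoretic reasoning recalled in the introduction), \cref{thm:main} follows at once. The crucial feature is that $\Phi$ is emphatically \emph{not} required to be an affine-unimodular equivalence: it only has to be piecewise-integral-linear and to match lattice points dilate-by-dilate, which is precisely the loophole that permits period collapse.

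The map $\Phi$ I would use is the piecewise-linear (``tropical'') avatar of the RSK correspondence. On lattice points it is ordinary RSK: a lattice point of $t\mathcal{B}_n^k$ is an $n\times n$ matrix $A$ of nonnegative integers with all row and column sums equal to $t$, and RSK sends $A$ to a pair $(P,Q)$ of semistandard tableaux of a common shape $\nu$, the contents of $P$ and $Q$ being the column-sum and row-sum vectors of $A$. I would realize this bijection through Fomin growth diagrams, recording the RSK shapes $\nu^{(i,j)}$ of all northwest submatrices of $A$. The local growth rules are built solely from $\min$, $\max$, and $+$, so they extend verbatim to real matrices and produce a continuous, piecewise-linear, dilation-equivariant map; on each chamber of linearity it is given by an integral linear map with integral inverse, which is exactly what makes both $\Phi$ and $\Phi^{-1}$ preserve lattice points at every scale.

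It then remains to read off the image of $\mathcal{B}_n^k$. The doubly-stochastic equalities translate into the statement that $P$ and $Q$ each have content $(1,1,\dots,1)$ (content $(t,\dots,t)$ after dilating). The defining inequalities of $\mathcal{B}_n^k$, which bound the sum of entries along every monotone chain from $(1,1)$ to $(n,n)$ by $k$, translate by the $\mathbb{N}$-matrix form of Greene's theorem into the single condition $\nu_1\le k$ on the first part of the common shape, since (because the entries are nonnegative) the maximal chain-sum over monotone paths equals the maximal sum over comparable-cell chains, which is $\nu_1$ by Schensted/Greene. Because the growth-diagram coordinates are governed entirely by interlacing inequalities, the image $\mathcal{Q}$ is cut out by Gelfand--Tsetlin-type (interlacing) constraints together with the fixed-content equalities and the bound $\nu_1\le k$; this is the promised Gelfand--Tsetlin polytope, and its lattice points are exactly the pairs $(P,Q)$ just described.

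The main obstacle, and the step demanding the most care, is the middle one: showing that piecewise-linear RSK is a genuine continuous bijection $\mathcal{B}_n^k\to\mathcal{Q}$ that is integral on each linearity chamber and agrees across chamber walls, so that the dilate-by-dilate lattice-point matching holds \emph{for all $t$ simultaneously}. One must verify that the growth rules and their inverses preserve integrality, check continuity at the walls where the combinatorial rule switches branch, and -- the subtlest point -- confirm that the resulting target is equivalent to a bona fide Gelfand--Tsetlin polytope whose Ehrhart polynomiality is available, rather than merely a polytope cut out by interlacing inequalities; in particular one must package the pair $(P,Q)$ together with the free ``peak'' shape $\nu$ (subject to $\nu_1\le k$) into a single Gelfand--Tsetlin pattern of the appropriate rank. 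By contrast, establishing Greene's theorem in the chain-sum formulation and the bookkeeping $\ell(\nu)\le n$, $|\nu|=nt$ should be routine once the bijection is in hand, and the degenerate endpoints ($k=1$ and $k=n$, as in \cref{ex:b32}) provide useful consistency checks.
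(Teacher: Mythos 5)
Your proposal is correct and follows essentially the same route as the paper: a homogeneous, continuous, piecewise-linear RSK map carrying $\mathcal{B}_n^k$ onto a Gelfand--Tsetlin polytope while matching lattice points in every dilate, followed by an appeal to the polynomiality of stretched Kostka numbers. The only cosmetic difference is that you realize piecewise-linear RSK via Fomin growth diagrams where the paper invokes Pak's inductive construction (with its stated properties of homogeneity, lattice-point preservation, diagonal sums, and Greene/Schensted), and your final ``packaging'' step is exactly the paper's \cref{lem:gt_equiv} identifying the glued pair $(P,Q)$ with $\mathcal{GT}_{(k^n,0^n),(1^n,(k-1)^n)}$.
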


\begin{example}
Continuing \cref{ex:b32}, the Ehrhart ``quasi''-polynomial of $\mathcal{B}_3^2$ is
\[L(\mathcal{B}_3^2;t) = \frac{1}{12}t^4 + \frac{1}{2}t^3 + \frac{17}{12}t^2 + 2t + 1 = 2\binom{t+3}{4} + \binom{t+2}{2}.\]
\end{example}

\begin{example}
The polytope $\mathcal{B}_5^3$ has $1232$ vertices, one of which is:
{\small \[\renewcommand*{\arraystretch}{1}\begin{pmatrix} 0 & 1/5 & 4/5 & 0 & 0 \\ 3/5 & 0 & 1/5 & 0 & 1/5 \\ 0 & 3/5 & 0 & 2/5 & 0 \\ 2/5 & 0 & 0 & 3/5 & 0 \\ 0 & 1/5 & 0 & 0 & 4/5 \end{pmatrix}\]}
Its Ehrhart ``quasi''-polynomial is
{\footnotesize \begin{gather*} 
 L(\mathcal{B}_5^3;t) = \frac{1553003}{6974263296000} t^{16} + \frac{661643}{74724249600} t^{15} + \frac{8186069}{49816166400} t^{14} + \frac{542779}{287400960} t^{13} +  \\  \frac{192164881}{12773376000} t^{12} + \frac{23022107}{261273600} t^{11} + \frac{958262023}{2438553600} t^{10} + \frac{7126361}{5225472} t^9 + \frac{26012124739}{6967296000} t^8 + \frac{4245350429}{522547200} t^7 \\ + \frac{8999625683}{638668800} t^6 + \frac{698212357}{35925120} t^5 + \frac{3070672490609}{145297152000} t^4 + \frac{9229105657}{518918400} t^3 + \frac{64192517}{5765760} t^2 + \frac{677}{144} t + 1.
\end{gather*}}
\end{example}

Even though \cref{thm:main} provides new examples of polytopes exhibiting Ehrhart period collapse, there is a sense in which these are examples are also not so new. This is because our proof of \cref{thm:main} is based on comparing the restricted Birkhoff polytope to a certain Gelfand--Tsetlin polytope. 

Specifically, we show that there is a continuous, piecewise-linear bijection from the restricted Birkhoff polytope $\mathcal{B}_n^k$ to a Gelfand--Tsetlin polytope $\mathcal{M}_n^k$ which moreover gives a bijection on lattice points (and, importantly, on lattice points of all dilates). Thus, $L(\mathcal{B}_n^k;t)$ and $L(\mathcal{M}_n^k;t)$ are the same. As we mentioned, the Ehrhart quasi-polynomial of any Gelfand--Tsetlin polytope is known to actually be a polynomial, so~$L(\mathcal{B}_n^k;t)$ is a polynomial as well. 

We note however that~$\mathcal{B}_n^k$ and~$\mathcal{M}_n^k$ are \emph{not} combinatorially equivalent (see~\cref{ex:comb_eq}), so in that sense the polytopes we produce are indeed new. Also, the bijection from $\mathcal{B}_n^k$ to $\mathcal{M}_n^k$ is essentially the \dfn{Robinson--Schensted--Knuth (RSK) correspondence}, a highly nontrivial construction from the theory of symmetric functions.

After reviewing some background on Gelfand--Tsetlin polytopes and the RSK correspondence in \cref{sec:gt,sec:rsk}, we proceed to prove \cref{thm:main} in the way we have indicated in \cref{sec:proof}. In \cref{sec:poset} we give a different perspective on the polytopes $\mathcal{B}_n^k$ and~$\mathcal{M}_n^k$ which relates them to poset polytopes. Finally, in \cref{sec:final} we conclude with some final remarks and questions.

\subsection*{Acknowledgments} This collaboration originated on MathOverflow~\cite{MO}. The use of Sage mathematical software~\cite{Sage} was also crucial for this work. Finally, we thank the anonymous referees for useful comments which improved the article.

\section{Gelfand--Tsetlin polytopes} \label{sec:gt}

A size $n$ \dfn{Gelfand--Tsetlin (GT) pattern} is a triangular array of numbers
\[ G = \begin{array}{c c c c c} g_{1,1} & g_{1,2} & g_{1,3} & \cdots & g_{1,n} \\ 
 & g_{2,2} & g_{2,3} & \cdots & g_{2,n} \\ & & g_{3,3} & \cdots & g_{3,n} \\ & & & \ddots & \vdots \\ & & & & g_{n,n} \end{array}\]
for which $g_{i,j} \geq g_{i,j+1} \geq g_{i+1,j+1}$ for all $i,j$. In other words, the entries must weakly decrease along rows and down columns. Here we allow the $g_{i,j}$ to be real numbers, but we say $G$ is integral if $g_{i,j}\in\mathbb{Z}$ for all $i,j$.

The \dfn{shape} of the GT pattern $G$ is~$(g_{1,1},g_{2,2},\ldots,g_{n,n})$. For $0 \leq \ell \leq n-1$, we let~$d_{\ell}(G)$ denote the sum of the entries of~$G$ along the diagonal where $j-i=\ell$, i.e.,
\[ d_{\ell}(G) \coloneqq \sum_{i=1}^{n-\ell}x_{i,\ell+i}.\]
The \dfn{content} of $G$ is $(d_{n-1}(G)-d_n(G),\, d_{n-2}(G)-d_{n-1}(G), \, \ldots,\, d_0(G)-d_1(G))$, with the convention $d_n(G)\coloneqq 0$.

Let $\lambda = (\lambda_1,\ldots,\lambda_n),\mu=(\mu_1,\ldots,\mu_n)\in \mathbb{N}^n$ be vectors of nonnegative integers. We will consider GT patterns of shape $\lambda$ and content $\mu$. If the entries of $\lambda$ are not weakly decreasing then there are no GT patterns of shape~$\lambda$, so from now on let us assume that $\lambda=(\lambda_1\geq \lambda_2 \geq \cdots \geq \lambda_n \geq 0)$ is an integer partition. For a similar reason, we also assume that $\mu_1+\cdots+\mu_n = \lambda_1 +\cdots+\lambda_n$.

It is well-known that the integral Gelfand--Tsetlin patterns of shape~$\lambda$ and content~$\mu$ are in bijection with semistandard Young tableaux of shape $\lambda$ and content~$\mu$. Hence, their number is given by the \dfn{Kostka number} $K_{\lambda,\mu}$. This is the dimension of the $\mu$ weight space in the irreducible $\mathrm{GL}_n$ representation $V^{\lambda}$.

The \dfn{Gelfand--Tsetlin polytope} $\mathcal{GT}_{\lambda,\mu}$ consists of all GT patterns of shape $\lambda$ and content $\mu$. Thus, the number of lattice points of $\mathcal{GT}_{\lambda,\mu}$ is $K_{\lambda,\mu}$. As we mentioned, $\mathcal{GT}_{\lambda,\mu}$ has non-integral vertices in general~\cite{deloera2004vertices, king2004stretched}. Nevertheless:

\begin{thm}[\cite{billey2004vector, kirillov2001ubiquity}] \label{thm:gt}
For any $\lambda, \mu$, $L(\mathcal{GT}_{\lambda,\mu}; t)$ is a polynomial.
\end{thm}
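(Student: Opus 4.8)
The plan is to identify the Ehrhart counting function with a stretched Kostka number and then appeal to the representation-theoretic polynomiality of weight multiplicities. First I would check how dilation interacts with the defining data of $\mathcal{GT}_{\lambda,\mu}$. The pattern inequalities $g_{i,j}\geq g_{i,j+1}\geq g_{i+1,j+1}$ are homogeneous, so they are preserved under scaling; the shape constraints fix the diagonal entries $g_{i,i}=\lambda_i$, and the content constraints fix the successive diagonal-sum differences to equal the entries of $\mu$, both of which scale linearly. Consequently $t\,\mathcal{GT}_{\lambda,\mu}=\mathcal{GT}_{t\lambda,t\mu}$, so the lattice points of the $t$th dilate are exactly the integral GT patterns of shape $t\lambda$ and content $t\mu$. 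This gives $L(\mathcal{GT}_{\lambda,\mu};t)=K_{t\lambda,t\mu}$, and reduces the theorem to showing that the stretched Kostka number $t\mapsto K_{t\lambda,t\mu}$ is a polynomial.

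For the second step I would use that $K_{\lambda,\mu}=\dim V^\lambda_\mu$ is a $\mathrm{GL}_n$ weight multiplicity. Kostant's multiplicity formula writes this as an alternating sum of values of the Kostant partition function $P$ (the number of ways to express a vector as a nonnegative integer combination of the positive roots $e_i-e_j$):
\[ K_{t\lambda,t\mu}=\sum_{w\in\mathfrak{S}_n}\mathrm{sgn}(w)\,P\bigl(t(w\lambda-\mu)+(w\rho-\rho)\bigr), \qquad \rho=(n-1,n-2,\ldots,1,0). \]
The crucial structural fact---this is exactly the content of the cited results---is that the weight multiplicity $m_\lambda(\mu)$ is a \emph{piecewise-polynomial} function of $(\lambda,\mu)$, with the pieces being genuine polynomials on a fan of cones. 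Since these cones are invariant under positive scaling, the entire ray $\{(t\lambda,t\mu):t>0\}$ is contained in a single cone of the fan, on which $m_\lambda(\mu)$ coincides with one fixed polynomial $Q$. Therefore $K_{t\lambda,t\mu}=Q(t\lambda,t\mu)$ is a polynomial in $t$, and the theorem follows.

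The main obstacle is precisely the word \emph{polynomial} in the previous paragraph. For a generic vector partition function the pieces are only quasi-polynomials, so restricting to a ray produces a quasi-polynomial with a possibly nontrivial period; the absence of any such period for weight multiplicities is a genuine cancellation phenomenon hidden inside the alternating sum above, and it is where all the work lies. I would establish it in one of two ways: either by following Kostant's formula directly and tracking how the periodic parts of the partition-function pieces cancel across the symmetric group (the approach of the cited partition-function analysis), or geometrically, by realizing $K_{t\lambda,t\mu}$ as $\dim H^0(M,\mathcal{L}^{\otimes t})$ for a line bundle $\mathcal{L}$ on the GIT quotient $M$ of the flag variety by the maximal torus at the character $\mu$, so that (after checking vanishing of higher cohomology) it becomes a Hilbert polynomial. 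Either route confirms that the pieces are honest polynomials, which is all that is needed to conclude.
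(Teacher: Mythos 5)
You should note that the paper itself gives no proof of this statement---it is quoted from \cite{billey2004vector, kirillov2001ubiquity}---and your proposal is essentially a correct reconstruction of the argument in those sources: the reduction $L(\mathcal{GT}_{\lambda,\mu};t)=K_{t\lambda,t\mu}$ via $t\,\mathcal{GT}_{\lambda,\mu}=\mathcal{GT}_{t\lambda,t\mu}$ is exactly the paper's remark that the theorem amounts to polynomiality of stretched Kostka numbers, and the piecewise-polynomiality of the weight multiplicity function on a fan of cones invariant under scaling is precisely the content of the cited vector-partition-function analysis. One correction on where the difficulty actually lies: in type $A$ the matrix whose columns are the positive roots $e_i-e_j$ is totally unimodular, so each chamber piece of the Kostant partition function is already an honest polynomial with no periodic part; the absence of a period is therefore not a cancellation phenomenon hidden in the alternating sum over $\mathfrak{S}_n$, but a unimodularity phenomenon present in each summand individually (one must still check that the affine ray $t(w\lambda-\mu)+(w\rho-\rho)$ eventually lies in a single chamber, which holds since the shift is bounded and the chambers are full-dimensional cones).
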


In the language of representation theory, \cref{thm:gt} says that the ``stretched'' Kostka numbers $K_{t\lambda,t\mu}$ are polynomial in $t$. It is known more generally that stretched Littlewood-Richardson coefficients $c_{t\lambda,t\mu}^{t\nu}$ are polynomial in $t$~\cite{derksen2002littlewood, rassart2004polynomiality}.

We explained in \cref{sec:intro} that our strategy for proving \cref{thm:main} is to show that~$\mathcal{B}_n^k$ has the same Ehrhart quasi-polynomial as a certain Gelfand--Tsetlin polytope $\mathcal{M}_n^k$, and then appeal to \cref{thm:gt}. However, it turns out to be convenient to present the partner polytope $\mathcal{M}_n^k$ in a slightly different way from how we just described GT patterns.

Specifically, we define $\mathcal{M}_n^k$ to be the set of $n\times n$ $\mathbb{R}$-matrices $Y=(y_{i,j})_{i,j=1}^{n}$ satisfying the inequalities
\begin{align*}
y_{i,j} \leq y_{i+1,j} &\textrm{ \; and \; } y_{i,j} \leq y_{i,j+1} \textrm{ \, for all $i,j$}, \\
y_{1,1} \geq 0 &\textrm{ \; and \; } y_{n,n} \leq k,
\end{align*}
as well as the equalities
\begin{align*}
(d_{n-1}(Y),d_{n-2}(Y),\ldots,d_0(Y)) &= (1,2,\ldots,n), \\
(d_0(Y),d_{-1}(Y),\ldots,d_{-n+1}(Y)) &= (n,n-1,\ldots,1).
\end{align*}
Here we again use $d_\ell(Y)$ to denote the sum of entries of $Y$ along the diagonal~$j-i=\ell$: $d_{\ell}(Y) \coloneqq \sum_{i=1}^{n-\ell}y_{i,\ell+i}$ for $0 \leq \ell \leq n-1$, while $d_{\ell}(Y) \coloneqq \sum_{i=1}^{n+\ell}y_{i-\ell,i}$ for~$-n+1 \leq \ell \leq 0$.

Note that, unlike GT patterns, the matrices in $\mathcal{M}_n^k$ have entries which are weakly \emph{increasing} along rows and down columns, but this is just a matter of convenience. We will now show that $\mathcal{M}_n^k$ is really a Gelfand--Tsetlin polytope in disguise. 

Two polytopes $\mathcal{P}$ in $\mathbb{R}^n$ and $\mathcal{Q}$ in $\mathbb{R}^m$ are said to be \dfn{integrally equivalent} if there is an affine transformation $\varphi\colon \mathbb{R}^n \to \mathbb{R}^m$ which restricts to a bijection $\varphi\colon \mathcal{P}\to \mathcal{Q}$ and also restricts to a bijection $\varphi\colon \mathrm{aff}(\mathcal{P})\cap\mathbb{Z}^n \to \mathrm{aff}(\mathcal{Q})\cap\mathbb{Z}^m$, where $\mathrm{aff}$ denotes affine span. Integrally equivalent polytopes are combinatorially equivalent and have the same Ehrhart functions (see, e.g., \cite[\S4]{stanley2002polytope}).

\begin{lemma} \label{lem:gt_equiv}
For any $1\leq k \leq n$, the polytope $\mathcal{M}_n^k$ is integrally equivalent to $\mathcal{GT}_{\lambda,\mu}$ where $\lambda=(k^n,0^n)$ and $\mu=(1^n,(k-1)^n)$. (Here we use multiplicity notation, i.e., $k^n$ means the part~$k$ repeated $n$ times.)
\end{lemma}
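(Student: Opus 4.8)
The plan is to exhibit an explicit affine map $\varphi$ realizing the integral equivalence, built from the flip $(i,j)\mapsto(n+1-i,n+1-j)$, and then to check that it matches the two polytopes' defining data. The first step is to pin down the shape of $\mathcal{GT}_{\lambda,\mu}$ for $\lambda=(k^n,0^n)$. Here the GT pattern $G=(g_{i,j})$ has size $2n$ and a frozen diagonal: $g_{i,i}=k$ for $i\le n$ and $g_{i,i}=0$ for $i>n$. Since the entries of a GT pattern weakly decrease as either index increases, every entry lies between $g_{1,1}=k$ and $g_{2n,2n}=0$; moreover, comparing $g_{i,j}$ with the diagonal entries $g_{j,j}$ and $g_{i,i}$ forces $g_{i,j}=k$ whenever $j\le n$ and $g_{i,j}=0$ whenever $i>n$. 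Thus the only free entries of $G$ form an $n\times n$ block, namely $h_{i,j}\coloneqq g_{i,n+j}$ for $1\le i,j\le n$, on which the GT inequalities say precisely that $H=(h_{i,j})$ weakly decreases along rows and down columns, subject only to $h_{i,1}\le k$ and $h_{n,j}\ge 0$.

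Next I would define $\varphi$ by setting $g_{i,n+j}=y_{n+1-i,\,n+1-j}$, together with the frozen values above; this is an affine (indeed, coordinate-relabelling-plus-integer-shift) map $\mathbb{R}^{n\times n}\to\mathbb{R}^{\binom{2n+1}{2}}$. The monotonicity of $H$ corresponds exactly to the monotonicity of $Y$ under the flip, and I would note that, given this monotonicity, the GT boundary inequalities $h_{i,1}\le k$ and $h_{n,j}\ge 0$ reduce to their extreme cases $h_{1,1}\le k$ and $h_{n,n}\ge 0$, which under the flip become precisely $y_{n,n}\le k$ and $y_{1,1}\ge 0$. So the inequality description of $\mathcal{M}_n^k$ transports to that of $\mathcal{GT}_{\lambda,\mu}$.

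The crux is to match the equality constraints: the content of $G$ against the prescribed diagonal sums of $Y$. I would first translate ``content $=\mu$'' into explicit values of the diagonal sums $d_{\ell}(G)$, and then compute each $d_{\ell}(G)$ through $\varphi$. Under the flip, $Y$-diagonal $\ell$ corresponds to $G$-diagonal $n-\ell$, and a short bookkeeping of the frozen $k$-entries that such a $G$-diagonal picks up yields identities of the shape $d_{n-\ell}(G)=\ell k+d_{\ell}(Y)$ for $\ell\ge 0$ and $d_{n-\ell}(G)=d_{\ell}(Y)$ for $\ell<0$. Substituting the required content values then collapses to exactly $d_{\ell}(Y)=n-\ell$ for $\ell\ge 0$ and $d_{\ell}(Y)=n+\ell$ for $\ell\le 0$, which are the defining equalities of $\mathcal{M}_n^k$. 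Getting the index ranges and the count of frozen entries on each diagonal right is the main technical obstacle; everything else is bookkeeping.

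Finally, to upgrade the affine bijection $\varphi\colon\mathcal{M}_n^k\to\mathcal{GT}_{\lambda,\mu}$ to an integral equivalence, I would observe that $\varphi$ acts on coordinates by a permutation of the variables together with the integer constants $k$ and $0$, so that both $\varphi$ and its inverse $y_{a,b}=g_{n+1-a,\,2n+1-b}$ carry integral arrays to integral arrays. Being injective and affine, $\varphi$ is an affine isomorphism onto its image, and since it carries $\mathcal{M}_n^k$ onto $\mathcal{GT}_{\lambda,\mu}$ it carries $\mathrm{aff}(\mathcal{M}_n^k)$ isomorphically onto $\mathrm{aff}(\mathcal{GT}_{\lambda,\mu})$. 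Integrality being preserved in both directions then gives the required bijection $\mathrm{aff}(\mathcal{M}_n^k)\cap\mathbb{Z}^{n\times n}\to\mathrm{aff}(\mathcal{GT}_{\lambda,\mu})\cap\mathbb{Z}^{\binom{2n+1}{2}}$, which establishes the integral equivalence and hence the lemma.
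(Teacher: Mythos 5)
Your proposal is correct and follows essentially the same route as the paper's proof: freeze the two triangles of $k$'s and $0$'s forced by the shape $\lambda=(k^n,0^n)$, identify the remaining free $n\times n$ block with a $180^\circ$ rotation of a matrix in $\mathcal{M}_n^k$, and translate the content condition into the prescribed diagonal sums. You simply carry out more explicitly the bookkeeping (the affine map, the reduction of the boundary inequalities to $y_{1,1}\geq 0$ and $y_{n,n}\leq k$, and the integrality check) that the paper leaves implicit, and your identities $d_{n-\ell}(G)=\ell k+d_{\ell}(Y)$ check out.
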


\begin{proof}
Let $\lambda$, $\mu$ be as in the statement of the lemma and consider what a Gelfand--Tsetlin pattern $G$ of shape $\lambda$ and content $\mu$ must look like. The main diagonal being $\lambda=(k^n,0^n)$ together with the requirement that entries decrease along rows and down columns forces a triangle of $k$'s and a triangle of $0$'s. For concreteness let us depict the case $n=3$:
\[ G = \begin{array}{c c c c c c} k & k & k & * & * & * \\ 
 & k & k & * & * & * \\ & & k & * & * & * \\ & & & 0 & 0 & 0 \\ & & & & 0 & 0 \\ & & & & & 0 \end{array}\]
Now consider what the other entries (the $n\times n$ pattern of $*$'s) can be. They are weakly decreasing along rows and down columns. They are at most $k$ and at least~$0$. And, because of the condition that the GT pattern has content $\mu=(1^n,(k-1)^n)$, their diagonal sums form the sequence $1,2,\ldots,n$ and then $n-1,n-2,\ldots,1$. There are no other requirements, so such a pattern of~$*$'s is exactly an $180^\circ$ rotation of a matrix in $\mathcal{M}_n^k$. Thus, $\mathcal{M}_n^k$ is integrally equivalent to~$\mathcal{GT}_{\lambda,\mu}$, as claimed.
\end{proof}

In the next two sections we will define a continuous, piecewise-linear bijection from $\mathcal{B}_n^k$ to $\mathcal{M}_n^k$ which will prove that $L(\mathcal{B}_n^k;t)=L(\mathcal{M}_n^k;t)$. But let us take a moment to observe that $\mathcal{B}_n^k$ and $\mathcal{M}_n^k$ are \emph{not} combinatorially equivalent.

\begin{example} \label{ex:comb_eq}
The polytope $\mathcal{M}_3^2$ has $7$ vertices:
\begin{gather*}
\begin{pmatrix}1 & 1 & 1 \\ 1 & 1 & 1 \\ 1 & 1 & 1 \end{pmatrix}, \, \begin{pmatrix}0 & 1 & 1 \\ 1 & 1 & 1 \\ 1 & 1 & 2 \end{pmatrix}, \, \begin{pmatrix}0 & 1 & 1 \\ 0 & 1 & 1 \\ 1 & 2 & 2 \end{pmatrix}, \, \begin{pmatrix}0 & 0 & 1 \\ 1 & 1 & 2 \\ 1 & 1 & 2 \end{pmatrix}, \, \begin{pmatrix}0 & 0 & 1 \\ 0 & 1 & 2 \\ 1 & 2 & 2 \end{pmatrix}, \\ 
\renewcommand*{\arraystretch}{1.5} \begin{pmatrix}\frac{1}{2} & \frac{1}{2} & 1 \\ \frac{1}{2} & \frac{1}{2} & \frac{3}{2} \\ 1 & \frac{3}{2} & 2 \end{pmatrix}, \, \begin{pmatrix}0 & \frac{1}{2} & 1 \\ \frac{1}{2} & \frac{3}{2} & \frac{3}{2} \\ 1 & \frac{3}{2} & \frac{3}{2} \end{pmatrix}
\end{gather*}
Recall from \cref{ex:b32} that $\mathcal{B}_3^2$ has $6$ vertices. So indeed $\mathcal{M}_3^2$ and $\mathcal{B}_3^2$ are not combinatorially equivalent. However, we can also check that they both have $5$ lattice points, in agreement with $L(\mathcal{B}_3^2;t)=L(\mathcal{M}_3^2;t)$.
\end{example}

We conclude this section with a few more basic comments comparing $\mathcal{B}_n^k$ and~$\mathcal{M}_n^k$. While $\mathcal{B}_n^n = \mathcal{B}_n$ is always an integral polytope, its partner $\mathcal{M}_n^n$ is not integral already for $n = 3$. For instance, the (non-integral) vertex of $\mathcal{M}_3^2$ listed last in \cref{ex:comb_eq} is also a vertex of $\mathcal{M}_3^3$. On the other hand, both $\mathcal{B}_n^1$ and $\mathcal{M}_n^1$ are single lattice points.

Next, consider the case $k=2$. We explained above that the lattice points of $\mathcal{B}_n^2$ are the $123$-avoiding permutations in $\mathfrak{S}_n$, of which there are Catalan number many. Via~\cref{lem:gt_equiv} we see that lattice points of $\mathcal{M}_n^2$ are in bijection with $n\times 2$ standard Young tableaux, another set classically enumerated by the Catalan numbers.

Finally, we note that because $\mathcal{B}_n^k$ is a subset of a $0,1$-polytope, it is clear that all its lattice points are actually vertices. Using an argument similar to~\cite[Proposition~9]{alexandersson2016gelfand}, it can also be shown that all the lattice points of~$\mathcal{M}_n^k$ are vertices. Alternatively, we can argue that the lattice points of~$\mathcal{M}_n^k$ are vertices as follows. Consider the following unimodular linear transformation $\varphi\colon \mathbb{R}^{n\times n} \to \mathbb{R}^{n\times n}$ (for concreteness, we depict the case~$n=4$):
\[ \varphi\colon \begin{pmatrix} x_{11} & x_{12} & x_{13} & x_{14} \\ x_{21} & x_{22} & x_{23} & x_{24} \\ x_{31} & x_{32} & x_{33} & x_{34} \\ x_{41} & x_{42} & x_{43} & x_{44} \end{pmatrix} \mapsto \begin{pmatrix} x_{11} & x_{12} & x_{13} & x_{14} \\ x_{21} & (x_{22}-x_{21}) & (x_{23}-x_{13}) & (x_{24}-x_{14}) \\ x_{31} & (x_{32}-x_{31}) & (x_{33}-x_{32}) & (x_{34}-x_{24}) \\ x_{41} & (x_{42}-x_{41}) & (x_{43}-x_{42}) & (x_{44}-x_{43}) \end{pmatrix} \]
Then the elements of $\varphi(\mathcal{M}_n^k)$ are matrices with nonnegative entries and diagonal sums all equal to $1$. Hence, $\varphi(\mathcal{M}_n^k)$ is a subset of a $0,1$-polytope, and so its lattice points are vertices. Because $\varphi$ is unimodular, this implies the same for $\mathcal{M}_n^k$.

\section{The RSK correspondence as a piecewise-linear map} \label{sec:rsk}

As we explained in \cref{sec:intro}, the bijection from $\mathcal{B}_n^k$ to $\mathcal{M}_n^k$ we require is a version of the RSK correspondence. We now review what we will need to know about RSK.

The \dfn{Robinson--Schensted--Knuth (RSK) correspondence} gives a bijection between $n\times n$ $\mathbb{N}$-matrices $X$ and pairs $(P,Q)$ of semistandard Young tableaux of the same shape with entries in $\{1,2,\ldots,n\}$. We refer the reader to~\cite[Chapter 7]{stanley1999ec2} for a textbook treatment  of the RSK correspondence. By viewing the output tableaux~$P$ and $Q$ as Gelfand--Tsetlin patterns, and by gluing these GT patterns along their main diagonal, we can produce a $n\times n$ $\mathbb{N}$-matrix~$Y$ which is weakly increasing along rows and down columns. In this way, we can view RSK as a construction whose inputs and outputs are both $n\times n$ $\mathbb{N}$-matrices. 

The crucial observation is that, viewed as a map between matrices in the way we have just indicated, RSK admits a \emph{piecewise-linear} description. This piecewise-linear description of RSK is due to Pak~\cite{pak2001hook}\footnote{See also the independent and contemporary work of Kirillov and Berenstein~\cite{kirillov1995, kirillov2001rsk}.}, and in fact it makes sense not just for $\mathbb{N}$-matrices but for $\mathbb{R}$-matrices. Without going through all the combinatorial details of its construction, we will summarize the essential properties of piecewise-linear RSK, which we denote $\rho$, in the following theorem.

Let $\mathcal{X}_n$ be the set of all $n\times n$ $\mathbb{R}$-matrices with nonnegative entries, i.e.,
\[ \mathcal{X}_n \coloneqq \{X = (x_{i,j})_{i,j=1}^{n}\colon x_{i,j} \geq 0 \textrm{ for all $i,j$}\}.\] 
Let $\mathcal{Y}_n$ be the set of all $n\times n$ $\mathbb{R}$-matrices with nonnegative entries which are weakly increasing in rows and columns, i.e.,
\[ \mathcal{Y}_n \coloneqq \{Y = (y_{i,j})_{i,j=1}^{n}\colon y_{1,1} \geq 0, \; y_{i,j} \leq y_{i+1,j} \textrm{ and } y_{i,j} \leq y_{i,j+1} \textrm{ for all $i,j$}\}. \]
Note that both $\mathcal{X}_n$ and $\mathcal{Y}_n$ are unbounded polyhedral cones in $\mathbb{R}^{n\times n}$.

\begin{thm}[Pak~\cite{pak2001hook}] \label{thm:rsk}
For any $n\geq 1$, there is a continuous, piecewise-linear bijection $\rho\colon \mathcal{X}_n\to \mathcal{Y}_n$ satisfying the following properties:
\begin{enumerate}
    \item \label{cond:homo} It is homogeneous in the sense that $\rho(tX)=t\rho(X)$ for all $X \in \mathcal{X}_n, t \geq 0$.
    
    \item \label{cond:lattice} It restricts to a bijection $\rho\colon \mathcal{X}_n \cap \mathbb{Z}^{n\times n} \to \mathcal{Y}_n \cap \mathbb{Z}^{n\times n}$ on lattice points.
    
    \item \label{cond:diag_sums} Let $X=(x_{i,j}) \in \mathcal{X}_n$ and $Y=(y_{i,j})\in \mathcal{Y}_n$ with $Y=\rho(X)$. 
    
    Let $r_i=r_i(X)$ and $c_j=c_j(X)$ be $X$'s row and column sums, respectively: $r_i(X) \coloneqq \sum_{j=1}^{n} x_{i,j}$ for $1 \leq i \leq n$ and $c_j(X) \coloneqq \sum_{i=1}^{n} x_{i,j}$ for $1 \leq j \leq n$. 
    
    Let $d_{\ell}=d_{\ell}(Y)$ be the diagonal sums of $Y$: $d_{\ell}(Y) \coloneqq \sum_{i=1}^{n-\ell}y_{i,\ell+i}$ for $0 \leq \ell \leq n-1$ and $d_{\ell}(Y) \coloneqq \sum_{i=1}^{n+\ell}y_{i-\ell,i}$ for $-n+1 \leq \ell \leq 0$.
    
    Then
    \begin{align*}
        (d_{n-1},d_{n-2},\ldots,d_0) &= (r_1, r_1 + r_2, \ldots, r_1+r_2 +\cdots + r_n), \\
        (d_{-n+1},d_{-n+2},\ldots,d_0) &= (c_1, c_1 + c_2, \ldots, c_1+c_2 +\cdots + c_n).
    \end{align*}
   
    \item \label{cond:lis} With $X$ and $Y$ as above,
    \[y_{n,n} = \max \; x_{i_1,j_1} + x_{i_2,j_2} + \cdots + x_{i_{2n-1},j_{2n-1}}, \]
    a maximum over all $(1,1)=(i_1,j_2) \lessdot (i_2,j_2) \lessdot \cdots \lessdot (i_{2n-1},j_{2n-1})=(n,n)$.
\end{enumerate}
\end{thm}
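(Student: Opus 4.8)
The plan is to take Pak's piecewise-linear lift of the RSK correspondence as the map $\rho$ and then verify the four listed properties, leaning on classical combinatorial RSK at lattice points together with a continuity-and-density argument to transport the combinatorial identities from integer matrices to all of $\mathcal{X}_n$.

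First I would recall the construction through Fomin's growth diagrams. One fills the cells of an $n\times n$ grid with the entries $x_{i,j}$ and propagates a partition-valued label across the grid according to a local rule at each cell. The key point is that this local rule, which classically manipulates Young diagrams, is governed entirely by formulas built from $+$, $\max$, and $\min$ applied to the incident labels and the cell entry. Reading these same formulas over $\mathbb{R}$ (with partitions replaced by real chains) yields a map $\rho\colon\mathcal{X}_n\to\mathcal{Y}_n$, whose output $Y$ is the gluing of the resulting $P$- and $Q$-symbols along the main diagonal. Since $\max$ and $\min$ of linear forms are continuous and piecewise-linear, and a finite composition of such maps is again continuous and piecewise-linear, $\rho$ inherits these properties. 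Moreover each local rule is assembled from forms with no constant term, hence is positively homogeneous of degree $1$, and composites of homogeneous maps are homogeneous; this is exactly property~\ref{cond:homo}.

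Next I would address bijectivity and the lattice-point statement~\ref{cond:lattice}. On an integer matrix the operations $+,\max,\min$ return integers, so $\rho$ sends $\mathcal{X}_n\cap\mathbb{Z}^{n\times n}$ into $\mathcal{Y}_n\cap\mathbb{Z}^{n\times n}$, and there it is by construction ordinary RSK followed by the GT-gluing; since RSK is a bijection from $\mathbb{N}$-matrices to pairs of equal-shape semistandard tableaux and the gluing is a bijection onto the lattice points of $\mathcal{Y}_n$, property~\ref{cond:lattice} follows. For bijectivity on the whole cone I would run the same recipe in reverse: inverse RSK is also governed by $+,\max,\min$ local rules, giving a continuous piecewise-linear map $\mathcal{Y}_n\to\mathcal{X}_n$; the two composites restrict to the identity on the dense set $\mathbb{Q}_{\ge 0}^{n\times n}$ (by homogeneity this reduces to the integer case), and hence equal the identity everywhere by continuity.

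The substantive content lies in properties~\ref{cond:diag_sums} and~\ref{cond:lis}, and I expect the main obstacle to be the combinatorics rather than the analysis. For~\ref{cond:diag_sums}, the diagonal sums $d_\ell(Y)$ record the sizes of the nested shapes appearing along the boundary of the growth diagram, and these sizes accumulate the partial row sums of $X$ on the $P$-side diagonals and the partial column sums on the $Q$-side---this is the standard fact that the shape after inserting the first $i$ rows of $X$ has size $r_1+\cdots+r_i$. For~\ref{cond:lis}, the entry $y_{n,n}$ is the largest part $\lambda_1$ of the output shape, and Greene's theorem identifies $\lambda_1$ with the maximum total weight of a monotone right/down lattice path from $(1,1)$ to $(n,n)$, which is precisely the chain maximum displayed in~\ref{cond:lis}. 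I would establish both identities first for $X\in\mathbb{N}^{n\times n}$ from these classical facts, and then invoke the extension principle that glues the argument together: each side is a continuous, positively homogeneous, piecewise-linear function of $X$, so agreement on $\mathbb{N}^{n\times n}$ forces agreement on $\mathbb{Q}_{\ge 0}^{n\times n}$ by rescaling and then on all of $\mathcal{X}_n$ by density. The one point I would take care to confirm is that Pak's tropical local rules reproduce classical RSK \emph{exactly} on integer inputs, since it is this faithfulness that makes Greene's theorem and the shape-size bookkeeping available to seed the density argument.
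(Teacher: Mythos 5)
Your proposal is a genuine proof sketch, whereas the paper does not prove \cref{thm:rsk} at all: it attributes the statement to Pak~\cite{pak2001hook} and merely locates each ingredient there (the slice-by-slice polytopes $\mathcal{X}_\delta$, $\mathcal{Y}_\delta$ and the inductive box-by-box construction of $\rho$ in Pak's Sections~3--4, with Property~\eqref{cond:lis} being his Theorem~6). Your construction via local rules is essentially the same as Pak's inductive construction, but your verification strategy is organized differently: Pak proves Properties~\eqref{cond:diag_sums} and~\eqref{cond:lis} \emph{directly} in the piecewise-linear setting (his Theorem~6 is a tropical analogue of Schensted's theorem valid for real inputs), whereas you prove them only on $\mathbb{N}^{n\times n}$ using classical RSK bookkeeping and Greene/Schensted, and then transport them to all of $\mathcal{X}_n$ by the extension principle ``continuous, piecewise-linear, positively homogeneous functions agreeing on $\mathbb{N}^{n\times n}$ agree on $\mathbb{Q}_{\ge 0}^{n\times n}$ by rescaling and hence everywhere by density.'' That principle is sound here (both sides of each identity are indeed continuous, PL, and homogeneous of degree~$1$), and the same device correctly handles bijectivity via the two composites with the tropical inverse. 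What your route buys is economy: one never has to re-prove Schensted or the content computation tropically. What it costs is that the entire argument hinges on the faithfulness claim you flag at the end --- that the tropicalized local rules reproduce classical RSK exactly on integer inputs --- and that claim is not a formality; it is precisely the content of Pak's (equivalently Fomin's growth-diagram, or Kirillov--Berenstein's) construction, so in a fully written version it would need either a careful induction or an explicit citation, at which point the two approaches largely converge.

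One small point to tighten: in Property~\eqref{cond:lis} the maximum is over monotone lattice paths from $(1,1)$ to $(n,n)$, while Greene's theorem gives $\lambda_1$ as a maximum over chains in the product order on positions. These agree only because the entries of $X$ are nonnegative (every chain extends to a lattice path without decreasing the sum); you should say this explicitly, since it is exactly where the hypothesis $X\in\mathcal{X}_n$ enters.
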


\begin{remark}
For those who know about the RSK correspondence but not its piecewise-linear description, let us briefly explain what these various properties mean in combinatorial terms. Property~\eqref{cond:lattice} simply says that we get a bijection between $\mathbb{N}$-matrices $X$ and pairs of tableaux $(P,Q)$ of the same shape. Property~\eqref{cond:diag_sums} says that the column sums of~$X$ give the content of $P$, and the row sums of~$X$ give the content of $Q$. Property~\eqref{cond:lis} is Schensted's theorem~\cite{schensted1961longest}, which says that the first part of the shape of $P$ records the length of the ``longest increasing subsequence'' of the input matrix~$X$. (The combinatorial meaning of Property~\eqref{cond:homo} is that scaling the input matrix $X$ corresponds to ``stretching'' the output tableaux $P$ and $Q$, which is true but is a less well-known property of RSK.)
\end{remark}

\begin{example} \label{ex:rsk}
For $n=2$ it is easy to see that $\rho$ must be
\[\rho\begin{pmatrix} x_{1,1} & x_{1,2} \\ x_{2,1} & x_{2,2} \end{pmatrix} = \begin{pmatrix} \min(x_{1,2},x_{2,1}) & x_{1,1}+x_{1,2} \\ x_{1,1}+x_{2,1} & x_{1,1}+x_{2,2}+\max(x_{1,2},x_{2,1})\end{pmatrix} \]
\end{example}

For $n \geq 3$, the formulas for the entries of $\rho(X)$ in terms of the entries of $X$ get very messy. The easiest approach to constructing $\rho$ is to define it inductively. In order to do this, one needs to consider as intermediary objects not just $n\times n$ $\mathbb{R}$-matrices, but fillings of Young diagrams of arbitrary shape with real numbers. Then, $\rho$ can be built up one box at a time. This inductive construction is explained clearly in~\cite{pak2001hook}; see also~\cite{hopkins2014rsk} for another expository account. At any rate, we just need the statement of \cref{thm:rsk} and not any specific formulas for $\rho$.

As mentioned, \cref{thm:rsk} is contained within Pak's paper~\cite{pak2001hook}, although he does not state it as a single proposition like we have. So let us take a moment to comment on where the various parts of \cref{thm:rsk} are discussed in~\cite{pak2001hook}. First note that Pak works ``slice-by-slice.'' That is, he fixes $\delta=(\delta_{-n+1},\ldots,\delta_0,\ldots,\delta_{n-1}) \in \mathbb{R}^{2n-1}$ and considers
\begin{align*} 
\mathcal{X}_{\delta} &\coloneqq \left\{X\in \mathcal{X}_n\colon \parbox{4.19in}{\small $(r_1(X),r_2(X),\ldots,r_n(X)) = (\delta_{n-1}-\delta_{n},\delta_{n-2}-\delta_{n-1},\ldots,\delta_0-\delta_1),$ \\ $(c_1(X),c_2(X),\ldots,c_n(X)) = (\delta_{-n+1}-\delta_{-n}, \delta_{-n+2}-\delta_{-n+1},\ldots,\delta_{0}-\delta_{-1})$} \right\}, \\
\mathcal{Y}_{\delta} &\coloneqq \{Y \in \mathcal{Y}_n\colon (d_{-n+1}(Y),\ldots,d_{n-1}(Y)) = \delta\}.
\end{align*}
(We use the convention $\delta_{-n}\coloneqq\delta_n\coloneqq 0$.) These are bounded polytopes, and Pak defines a bijection between these two polytopes for each fixed $\delta$. But the bijections for different~$\delta$ can be packaged together in the way that we have stated. Note as well that Pak defines the bijection in the other direction (i.e., from $\mathcal{Y}_{\delta}$ to $\mathcal{X}_{\delta}$). But again that is not an issue because he also explains what the inverse map looks like. 

These polytopes are defined in \cite[Section~3]{pak2001hook} and the inductive construction of the bijection $\rho$ between them is given in \cite[Section~4]{pak2001hook}.  Property~\eqref{cond:homo} of this bijection is clear from construction, Properties~\eqref{cond:lattice} and~\eqref{cond:diag_sums} are verified in \cite[Section~4]{pak2001hook}, and Property~\eqref{cond:lis} is \cite[Theorem 6]{pak2001hook}. Again, see~\cite{hopkins2014rsk} for another account of the construction of $\rho$.

\section{Proof of the main result} \label{sec:proof}

An easy corollary of \cref{thm:rsk} is:

\begin{cor} \label{cor:main}
For any $1\leq k \leq n$, there is a continuous, piecewise-linear bijection $\rho\colon \mathcal{B}_n^k \to \mathcal{M}_n^k$ that restricts to a bijection $\rho\colon \mathcal{B}_n^k \cap \frac{1}{t}\mathbb{Z}^{n\times n} \to \mathcal{M}_n^k \cap \frac{1}{t}\mathbb{Z}^{n\times n}$ for any integer $t\geq 1$.
\end{cor}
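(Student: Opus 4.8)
The plan is to obtain the required map simply by restricting Pak's piecewise-linear RSK bijection $\rho\colon \mathcal{X}_n \to \mathcal{Y}_n$ from \cref{thm:rsk}. First I would note the containments that make this sensible: the matrices in $\mathcal{B}_n^k$ are nonnegative, so $\mathcal{B}_n^k \subseteq \mathcal{X}_n$, while the matrices in $\mathcal{M}_n^k$ are nonnegative and weakly increasing along rows and down columns, so $\mathcal{M}_n^k \subseteq \mathcal{Y}_n$. The entire content of the corollary is then to verify that $\rho$ carries $\mathcal{B}_n^k$ exactly onto $\mathcal{M}_n^k$, which I would establish by matching the defining (in)equalities of the two polytopes one family at a time.

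Fix $X \in \mathcal{X}_n$ and write $Y = \rho(X)$. The nonnegativity and weak-monotonicity constraints in the definition of $\mathcal{M}_n^k$ hold automatically, since $Y \in \mathcal{Y}_n$. The diagonal-sum equalities defining $\mathcal{M}_n^k$ are, by property~\eqref{cond:diag_sums} of $\rho$, equivalent to the partial-sum conditions $r_1 + \cdots + r_i = i$ and $c_1 + \cdots + c_j = j$ for all $i,j$; telescoping these forces every row and column sum of $X$ to equal $1$, i.e.\ forces $X \in \mathcal{B}_n$. Next, by Schensted's property~\eqref{cond:lis}, the entry $y_{n,n}$ equals the maximum of the chain sums $x_{i_1,j_1} + \cdots + x_{i_{2n-1},j_{2n-1}}$ over all chains $(1,1) \lessdot \cdots \lessdot (n,n)$, which are exactly the sums appearing in the definition of $\mathcal{B}_n^k$; hence $y_{n,n} \leq k$ if and only if every such chain sum is at most $k$. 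Combining these equivalences gives, for $X \in \mathcal{X}_n$, that $X \in \mathcal{B}_n^k$ if and only if $\rho(X) \in \mathcal{M}_n^k$. Since $\rho$ is a bijection $\mathcal{X}_n \to \mathcal{Y}_n$ and $\mathcal{M}_n^k \subseteq \mathcal{Y}_n$, every element of $\mathcal{M}_n^k$ has a unique $\rho$-preimage, which by the equivalence lies in $\mathcal{B}_n^k$; thus $\rho$ restricts to a bijection $\mathcal{B}_n^k \to \mathcal{M}_n^k$, and this restriction is continuous and piecewise-linear because $\rho$ is.

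It remains to treat the lattice points of every dilate, where I would invoke the homogeneity property~\eqref{cond:homo} together with the integral lattice-point property~\eqref{cond:lattice}. For an integer $t \geq 1$, a matrix $X$ lies in $\tfrac{1}{t}\mathbb{Z}^{n\times n}$ precisely when $tX \in \mathbb{Z}^{n\times n}$. Since $\rho(tX) = t\rho(X)$ and $\rho$ sends integral matrices of $\mathcal{X}_n$ to integral matrices of $\mathcal{Y}_n$, we conclude $\rho(X) \in \tfrac{1}{t}\mathbb{Z}^{n\times n}$ whenever $X$ is. Applying the same reasoning to $\rho^{-1}$, which is likewise homogeneous and integrality-preserving, gives the reverse implication. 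Intersecting this with the bijection $\mathcal{B}_n^k \to \mathcal{M}_n^k$ already obtained then yields the claimed bijection on $\tfrac{1}{t}\mathbb{Z}^{n\times n}$-points.

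I do not anticipate a genuine obstacle, which is why the statement is billed as an easy corollary: everything reduces to the four properties of \cref{thm:rsk}. The one point deserving care is the bookkeeping in the diagonal-sum step, namely checking that the two equalities in the definition of $\mathcal{M}_n^k$ (listed with indices running $d_{n-1},\ldots,d_0$ in the first and $d_0,\ldots,d_{-n+1}$ in the second) align correctly with the row- and column-sum halves of property~\eqref{cond:diag_sums}, accounting for the reversal of index ordering, so that the telescoping really does force all $r_i = c_j = 1$.
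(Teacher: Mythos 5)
Your proposal is correct and follows exactly the paper's own argument: restrict Pak's piecewise-linear RSK map $\rho$, use Property~\eqref{cond:diag_sums} to match the row/column-sum equalities with the diagonal-sum equalities, Property~\eqref{cond:lis} to match the chain-sum bound with $y_{n,n}\leq k$, and Properties~\eqref{cond:homo} and~\eqref{cond:lattice} for the $\frac{1}{t}\mathbb{Z}^{n\times n}$-points. The only difference is that you spell out the telescoping and the $\rho^{-1}$ direction slightly more explicitly than the paper does.
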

\begin{proof}
The $\rho$ in question is of course the restriction of the $\rho$ from \cref{thm:rsk}.

To see that this works, let $X \in \mathcal{B}_n^k\subseteq \mathcal{X}_n$. First note, thanks to Property~\eqref{cond:diag_sums}, that the row and column sums of $X$ all being equal to one is equivalent to the sequence of diagonal sums of $\rho(X)$ being $1,2,\ldots,n$ and then $n-1,n-2,\ldots,1$. Further, thanks to Property~\eqref{cond:lis}, the maximum sum of any chain of entries in $X$ being bounded by~$k$ is equivalent to the overall maximum entry of~$\rho(X)$ being at most~$k$. These conditions on $\rho(X)$ are precisely the equalities and inequalities which define the polytope $\mathcal{M}_n^k$ inside of~$\mathcal{Y}_n$, so indeed $\rho$ restricts to a bijection $\rho\colon \mathcal{B}_n^k \to \mathcal{M}_n^k$. 

The last part, about giving a bijection when we further intersect with $\frac{1}{t}\mathbb{Z}^{n\times n}$, follows from Properties~\eqref{cond:homo} and~\eqref{cond:lattice}.
\end{proof}

We can now prove our main result.

\begin{proof}[Proof of \cref{thm:main}]
\Cref{cor:main} says that $L(\mathcal{B}_n^k;t) = L(\mathcal{M}_n^k;t)$. By \cref{lem:gt_equiv} and \cref{thm:gt}, $L(\mathcal{M}_n^k;t)$ is a polynomial. So $L(\mathcal{B}_n^k;t)$ is also a polynomial.
\end{proof}

\begin{remark} \label{rem:contingency}
There is a straightforward ``contingency table'' / ``transportation polytope'' extension of our main result. Let $\alpha = (\alpha_1,\ldots,\alpha_n), \; \beta=(\beta_1,\ldots,\beta_n)\in\mathbb{N}^n$ be fixed nonnegative integer vectors with $\alpha_1+\cdots+\alpha_n = \beta_1+\cdots+\beta_n$. Define
\[ \mathcal{B}_{\alpha,\beta}\coloneqq \{X\in \mathcal{X}_n\colon (r_1(X),\ldots,r_n(X)) = \alpha, \; (c_1(X),\ldots,c_n(X))=\beta\}.\]
 Of course, $\mathcal{B}_{(1,\ldots,1),(1,\ldots,1)}=\mathcal{B}_n$. The lattice points of $\mathcal{B}_{\alpha,\beta}$ are called \dfn{contingency tables} with row sums $\alpha$ and column sums $\beta$, and $\mathcal{B}_{\alpha,\beta}$ itself is  called a \dfn{transportation polytope}~\cite{klee1968facets, deloera2014transportation}. We define the partner polytope to $\mathcal{B}_{\alpha,\beta}$ in an analogous way:
\[ \mathcal{M}_{\alpha,\beta}\coloneqq \left\{Y\in \mathcal{Y}_n\colon \parbox{3.75in}{\begin{center}$(d_{n-1}(Y),\ldots,d_0(Y)) = (\alpha_1,\alpha_1+\alpha_2,\ldots,\alpha_1+\cdots+\alpha_n),$ \\ $(d_{-n+1}(Y),\ldots,d_0(Y))=(\beta_1,\beta_1+\beta_2,\ldots,\beta_1+\cdots+\beta_n)$\end{center}}\right\}.\]
Let $r\coloneqq \max(\alpha_1,\ldots,\alpha_n,\beta_1,\ldots,\beta_n)$ and $s\coloneqq \alpha_1+\cdots+\alpha_n=\beta_1+\cdots+\beta_n$, and for $k=r,r+1,\ldots,s$, define the restricted polytopes:
\begin{align*}
    \mathcal{B}^k_{\alpha,\beta} &\coloneqq\left\{X\in \mathcal{B}_{\alpha,\beta}\colon \parbox{3.5in}{\begin{center}$x_{i_1,j_1}+\cdots+x_{i_{2n-1},j_{2n-1}} \leq k$ \; for all \\ $(1,1)=(i_1,j_1)\lessdot (i_2,j_2) \lessdot \cdots \lessdot (i_{2n-1},j_{2n-1})=(n,n)$\end{center}} \right\}, \\
    \mathcal{M}^k_{\alpha,\beta} &\coloneqq\{Y\in \mathcal{M}_{\alpha,\beta}\colon y_{n,n} \leq k\}.
\end{align*}
Note that $\mathcal{B}^s_{\alpha,\beta} =\mathcal{B}_{\alpha,\beta}$ and $\mathcal{M}^s_{\alpha,\beta} =\mathcal{M}_{\alpha,\beta}$. The analog of \cref{cor:main} holds for $\mathcal{B}^k_{\alpha,\beta}$ and $\mathcal{M}^k_{\alpha,\beta}$, so that $L(\mathcal{B}^k_{\alpha,\beta};t)=L(\mathcal{M}^k_{\alpha,\beta};t)$. Furthermore, the same argument as in \cref{lem:gt_equiv} shows that $\mathcal{M}^k_{\alpha,\beta}$ is integrally equivalent to $\mathcal{GT}_{\lambda,\mu}$ with $\lambda=(k^n,0^n)$ and $\mu=(\alpha_1,\, \alpha_2,\, \ldots,\, \alpha_n,\, (k-\beta_n),\, (k-\beta_{n-1}),\, \ldots, \, (k-\beta_1)).$ Hence, by \cref{thm:gt}, $L(\mathcal{M}^k_{\alpha,\beta};t)$ is a polynomial, and so $L(\mathcal{B}^k_{\alpha,\beta};t)$ is a polynomial as well.
\end{remark}

\begin{remark}
In order to prove \cref{thm:main} all we really needed to do was find a bijection between $t\mathcal{B}_n^k \cap \mathbb{Z}^{n\times n}$ and $t\mathcal{M}_n^k \cap \mathbb{Z}^{n\times n}$ for all integers $t \geq 1$. For this purpose, the usual RSK bijection (i.e., between $\mathbb{N}$-matrices and pairs of tableaux) suffices. We preferred to use the piecewise-linear RSK map $\rho$ because it gives a bijection between the polytopes themselves. This piecewise-linear bijection between $\mathcal{B}_n^k$ and $\mathcal{M}_n^k$ has a similar flavor to other well-known polytopal constructions, like Stanley's transfer map between order and chain polytopes (see the next section).
\end{remark}

\section{A poset polytope perspective} \label{sec:poset}

In this section we explain a different way of thinking about the polytopes $\mathcal{B}_n^k$ and~$\mathcal{M}_n^k$. Rather than view $\mathcal{B}_n^k$ as obtained from the Birkhoff polytope by imposing additional inequalities, we can view it as obtained from a chain polytope by imposing certain equalities. Similarly, rather than view $\mathcal{M}_n^k$ as a Gelfand--Tsetlin polytope, we can view it as obtained from an order polytope by imposing certain equalities. 

The chain and order polytopes are the two poset polytopes introduced by Stanley in~\cite{stanley1986two}. We now briefly review their construction and the basic facts about them. So in this section we let $P$ be a finite poset. We refer the reader to~\cite[Chapter 3]{stanley2012ec1} for all poset terminology and notation we do not define.

Recall that a \dfn{chain} in $P$ is a totally ordered sequence $p_1 < p_2 <\cdots < p_k \in P$. We use $\mathbb{R}^P$ to denote the set of functions $f\colon P \to \mathbb{R}$. The \dfn{chain polytope} $\mathcal{C}(P)$ is the set of $f \in \mathbb{R}^P$ satisfying
\begin{align*}
    f(p) \geq 0 &\textrm{\, for all $p\in P$}, \\
    f(p_1) + \cdots + f(p_k) \leq 1 &\textrm{\, for all chains $p_1 < \cdots < p_k \in P$}.
\end{align*}
Stanley proved that the vertices of $\mathcal{C}(P)$ are the indicator functions of antichains of~$P$. Recall that an \dfn{antichain} is a subset $A\subseteq P$ of pairwise incomparable elements.

The \dfn{order polytope} $\mathcal{O}(P)$ is the set of $f\in \mathbb{R}^P$ satisfying
\begin{align*}
    0\leq f(p) \leq 1 &\textrm{\, for all $p\in P$}, \\
    f(p) \leq f(q) &\textrm{\, for all $p\leq q \in P$}.
\end{align*}
Stanley proved that the vertices of $\mathcal{O}(P)$ are the indicator functions of order filters of $P$. Recall that an \dfn{order filter} is a subset $F\subseteq P$ that is upwards-closed, i.e., for which $p \in F$ and $q \geq p \in P$ implies $q \in F$.

The chain and order polytopes of $P$ are not combinatorially equivalent in general. Nevertheless, Stanley proved:

\begin{thm}[{Stanley~\cite[Theorem 3.2]{stanley1986two}}] \label{thm:transfer}
There is a continuous, piecewise-linear bijection $\phi\colon \mathcal{O}(P)\to\mathcal{C}(P)$ that restricts to a bijection $\phi\colon \mathcal{O}(P) \cap \frac{1}{t}\mathbb{Z}^{P} \to \mathcal{C}(P) \cap \frac{1}{t}\mathbb{Z}^{P}$ for any integer $t\geq 1$.
\end{thm}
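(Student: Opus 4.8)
The plan is to exhibit Stanley's transfer map $\phi$ explicitly, together with a candidate inverse $\psi$, and then check the four things the statement requires: that each map lands in the correct polytope, that the two maps are mutually inverse, that they are continuous and piecewise-linear, and that they preserve the lattices $\frac{1}{t}\mathbb{Z}^P$. First I would define $\phi\colon \mathcal{O}(P) \to \mathbb{R}^P$ by
\[ \phi(f)(p) = f(p) - \max_{q \lessdot p} f(q), \]
where $q \lessdot p$ means $q$ is covered by $p$ and the maximum over the empty set (i.e.\ when $p$ is minimal) is taken to be $0$. The candidate inverse $\psi\colon \mathcal{C}(P) \to \mathbb{R}^P$ sends $g$ to
\[ \psi(g)(p) = \max\{ g(p_1) + \cdots + g(p_k) : p_1 < \cdots < p_k = p \}, \]
the largest $g$-weight of a chain of $P$ whose top element is $p$ (the singleton chain $\{p\}$ is allowed).

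Next I would verify well-definedness. To see $\phi(f) \in \mathcal{C}(P)$, nonnegativity of $\phi(f)(p)$ is immediate from monotonicity of $f$, and for the chain inequality, given $p_1 < \cdots < p_k$ I would use that each $p_{i-1}$ lies weakly below some cover of $p_i$ (walk up a saturated chain from $p_{i-1}$ to $p_i$), so that $\phi(f)(p_i) \leq f(p_i) - f(p_{i-1})$; summing telescopes to $f(p_k) \leq 1$. To see $\psi(g) \in \mathcal{O}(P)$, the singleton chain gives $\psi(g)(p) \geq g(p) \geq 0$, the defining chain inequality of $\mathcal{C}(P)$ gives $\psi(g)(p) \leq 1$, and appending $q$ to an optimal chain ending at $p$ (for $p < q$) shows $\psi(g)(q) \geq \psi(g)(p)$, so $\psi(g)$ is order-preserving.

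The heart of the argument is checking $\psi \circ \phi = \mathrm{id}$ and $\phi \circ \psi = \mathrm{id}$. For the first, the telescoping bound above already gives $\psi(\phi(f))(p) \leq f(p)$, and equality is attained along the saturated chain obtained by descending from $p$ through the covers that realize each inner maximum, where $\phi(f)$ telescopes exactly. For the second, I would establish the recursion $\psi(g)(p) = g(p) + \max_{q \lessdot p} \psi(g)(q)$ by splitting an optimal chain for $p$ into its top element $p$ and the remaining chain ending at some $q < p$, and then rearrange to get $\phi(\psi(g)) = g$. I expect this mutual-inverse verification, and in particular the identity $\max_{q < p} f(q) = \max_{q \lessdot p} f(q)$ for monotone $f$ (which lets me pass between arbitrary elements below $p$ and covers of $p$), to be the main obstacle; the rest is bookkeeping.

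Finally, both $\phi$ and $\psi$ are assembled from the coordinate functions by taking differences, sums, and maxima, hence are continuous and piecewise-linear. Since differences, sums, and maxima of elements of $\frac{1}{t}\mathbb{Z}$ remain in $\frac{1}{t}\mathbb{Z}$, each map carries $\frac{1}{t}\mathbb{Z}^P$ into itself; combined with the fact that $\phi$ and $\psi$ are mutually inverse bijections between the two polytopes, this yields the claimed restricted bijection on $\frac{1}{t}\mathbb{Z}^P$-points for every integer $t \geq 1$.
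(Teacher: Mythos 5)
Your proof is correct and uses exactly the transfer map $\phi$ and its inverse $\psi=\phi^{-1}$ that the paper displays (the paper itself only cites Stanley for the proof); your variant $\phi(f)(p)=f(p)-\max_{q\lessdot p}f(q)$ over covers agrees with the paper's $\max\{f(q)\colon q<p\}$ on $\mathcal{O}(P)$ by monotonicity, as you note. The verification of well-definedness, the telescoping argument for $\psi\circ\phi=\mathrm{id}$, the recursion for $\phi\circ\psi=\mathrm{id}$, and the $\frac{1}{t}\mathbb{Z}^P$-preservation are all sound, so nothing is missing.
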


The $\phi\colon \mathcal{O}(P)\to\mathcal{C}(P)$ from \cref{thm:transfer} is called the \dfn{transfer map}, and it is actually quite easy to define:
\[ \phi(f)(p) \coloneqq \begin{cases} f(p) &\textrm{if $p$ is minimal}, \\ f(p) - \max\{f(q)\colon q<p \in P\} &\textrm{otherwise}.\end{cases}\]
Observe how, at the level of vertices, $\phi$ is the canonical bijection between order filters and antichains which sends an order filter to its subset of minimal elements. The inverse transfer map $\phi^{-1}\colon \mathcal{C}(P)\to\mathcal{O}(P)$ also has a simple description:
\[ \phi^{-1}(f)(p) = \max\{f(p_1)+\cdots+f(p_k)\colon \textrm{ chains \, } p_1 < \cdots < p_k=p \in P\}. \]
At the level of vertices, $\phi^{-1}$ sends an antichain to its upwards closure.

In order to make the connection with the restricted Birkhoff polytope, we need to focus on one particular poset. We use $[n] \coloneqq \{1,2,\ldots,n\}$ to denote the $n$-element chain poset, and hence use $[n]\times [m]$ to denote the (Cartesian) product of two chains. The connection between poset polytopes and the restricted Birkhoff polytopes is then given by the following proposition, whose proof is straightforward.

\begin{prop}
View the elements of $\mathbb{R}^{[n]\times [n]}$ as $n\times n$ $\mathbb{R}$-matrices in the obvious way. Then,
\begin{align*}
\mathcal{B}_n^k &= \left\{X \in k\mathcal{C}([n]\times[n])\colon\parbox{2.1in}{\begin{center}$(r_1(X),\ldots,r_n(X)) = (1,\ldots,1),$ \\ $(c_1(X),\ldots,c_n(X))=(1,\ldots,1)$\end{center}}\right\}, \\
\mathcal{M}_n^k &= \left\{Y\in k\mathcal{O}([n]\times[n])\colon \parbox{3.25in}{\begin{center}$(d_{n-1}(Y),d_{n-2}(Y),\ldots,d_{0}(Y))=(1,2,\ldots,n),$ \\
$(d_{0}(Y),d_{-1}(Y),\ldots,d_{-n+1}(Y))=(n,n-1,\ldots,1)$\end{center}}\right\}.
\end{align*}
Here, as before, we use $r_i(X)$ (respectively, $c_j(X)$) to denote the row sums (resp., column sums) of a matrix $X$, and $d_\ell(Y)$ to denote the diagonal sums of a matrix~$Y$. 
\end{prop}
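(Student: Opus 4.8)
The plan is to unwind both definitions and match them, term by term, against the definitions of $\mathcal{B}_n^k$ and $\mathcal{M}_n^k$ given in \cref{sec:intro,sec:gt}. Throughout I identify a function $f$ on $[n]\times[n]$ with the matrix whose $(i,j)$ entry is $f(i,j)$, so that the partial order reads $(i,j)\le (i',j')$ iff $i\le i'$ and $j\le j'$; the unique minimal element is $(1,1)$, the unique maximal element is $(n,n)$, and the covering relations $(i,j)\lessdot(i',j')$ are exactly $(i',j')=(i+1,j)$ or $(i',j')=(i,j+1)$. In particular the saturated chains from $(1,1)$ to $(n,n)$ are precisely the sequences $(1,1)=(i_1,j_1)\lessdot\cdots\lessdot(i_{2n-1},j_{2n-1})=(n,n)$ appearing in the definition of $\mathcal{B}_n^k$.

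First I would handle the chain polytope. By definition $X\in k\mathcal{C}([n]\times[n])$ means $x_{i,j}\ge 0$ for all $i,j$ together with $\sum_s x_{i_s,j_s}\le k$ for every chain in the poset. The key reduction is that, because all entries are nonnegative, it suffices to impose this inequality only for the saturated chains running from $(1,1)$ to $(n,n)$: an arbitrary chain $p_1<\cdots<p_m$ can be extended to such a saturated chain $\tilde C$ by prepending a saturated chain from $(1,1)$ up to $p_1$, inserting saturated chains between consecutive $p_s$, and appending one from $p_m$ up to $(n,n)$; then $\sum_{p\in\{p_1,\ldots,p_m\}} x_p \le \sum_{p\in\tilde C} x_p \le k$ whenever the saturated-chain bound holds. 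Thus $k\mathcal{C}([n]\times[n])$ is the set of $X$ with $x_{i,j}\ge0$ and $\sum_s x_{i_s,j_s}\le k$ for all saturated chains from $(1,1)$ to $(n,n)$. Intersecting with the equalities $r_i(X)=1$ and $c_j(X)=1$ for all $i,j$ then reproduces exactly the nonnegativity, row-sum, column-sum, and chain inequalities defining $\mathcal{B}_n^k$, giving the first equality.

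Next I would do the order polytope. By definition $Y\in k\mathcal{O}([n]\times[n])$ means $0\le y_{i,j}\le k$ for all $i,j$ together with $y_p\le y_q$ whenever $p\le q$. Two simplifications make this match $\mathcal{M}_n^k$: first, monotonicity with respect to the full partial order is equivalent, by transitivity along saturated chains, to monotonicity across covering relations alone, i.e.\ $y_{i,j}\le y_{i+1,j}$ and $y_{i,j}\le y_{i,j+1}$ for all $i,j$; second, given this monotonicity, $(1,1)$ and $(n,n)$ carry the smallest and largest entries, so the box constraints $0\le y_{i,j}\le k$ collapse to $y_{1,1}\ge 0$ and $y_{n,n}\le k$. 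Hence $k\mathcal{O}([n]\times[n])$ is cut out by exactly the inequalities in the definition of $\mathcal{M}_n^k$, and intersecting with the diagonal-sum equalities $(d_{n-1}(Y),\ldots,d_0(Y))=(1,\ldots,n)$ and $(d_0(Y),\ldots,d_{-n+1}(Y))=(n,\ldots,1)$ yields the second equality.

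The only steps requiring any thought are the reductions from all chains to saturated maximal chains in the chain polytope and, dually, from the full order relation to covering relations and from the box constraints to the two corner constraints in the order polytope. All three are immediate---the first from nonnegativity of the entries, the others from transitivity of the partial order---so there is no real obstacle. The content of the proposition is simply the observation that $\mathcal{B}_n^k$ and $\mathcal{M}_n^k$ are the slices of the dilated poset polytopes $k\mathcal{C}([n]\times[n])$ and $k\mathcal{O}([n]\times[n])$ cut out by the relevant row/column-sum and diagonal-sum equalities, respectively.
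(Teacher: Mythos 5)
Your proof is correct and is precisely the routine verification that the paper omits (it states the proposition ``whose proof is straightforward'' and gives no argument). The two reductions you isolate---from all chains to saturated maximal chains via nonnegativity for $k\mathcal{C}([n]\times[n])$, and from the full order relation and box constraints to cover relations and the two corner constraints for $k\mathcal{O}([n]\times[n])$---are exactly the points that need checking, and you check them correctly.
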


We also note that the $\mathcal{B}_{\alpha,\beta}^k$ and $\mathcal{M}_{\alpha,\beta}^k$ from \cref{rem:contingency} can be obtained similarly.

However, the transfer map does not play nicely with these intersections with affine subspaces. In particular, $\phi^{-1}$ fails to give a bijection between $\mathcal{B}_n^k$ and $\mathcal{M}_n^k$, as we show in the next example.

\begin{example}
Even for $n=2$, we have
\[\phi^{-1}\begin{pmatrix} x_{1,1} & x_{1,2} \\ x_{2,1} & x_{2,2} \end{pmatrix} = \begin{pmatrix} x_{1,1} & x_{1,1} + x_{1,2} \\ x_{1,1} + x_{2,1} & x_{1,1}+x_{2,2}+\max(x_{1,2},x_{2,1})\end{pmatrix},\]
which fails to map $\mathcal{B}_2^2$ onto $\mathcal{M}_2^2$. Compare to \cref{ex:rsk}.
\end{example}

There are other variants of the transfer map, such as for so-called ``marked'' order and chain polytopes~\cite{ardila2011gelfand}, but as far as we know none of these transfer map variants recovers the RSK bijection $\rho\colon \mathcal{B}_n^k \to \mathcal{M}_n^k$.

\section{Final remarks} \label{sec:final}

We conclude with some final thoughts and questions.

\begin{remark}
It is well-known that the Birkhoff polytope $\mathcal{B}_n$ lives in the $(n^2-2n+1)$-dimensional subspace of $\mathbb{R}^{n\times n}$ determined by the row and column sums equalling one, and within this subspace is a full-dimensional polytope with $n^2$-facets given by the nonnegative entries inequalities (again, see, e.g., \cite[Example~0.12]{ziegler1995lectures}). Recall that we defined the restricted Birkhoff polytope $\mathcal{B}_n^k$ from $\mathcal{B}_n$ by imposing $\binom{2n-2}{n-1}$ additional inequalities. It is not hard to see that for $k > 1$,  $\mathcal{B}_n^k$ remains $(n^2-2n+1)$-dimensional. But we do not know which of the additional inequalities are irredundant, i.e., we do not have a description of the facets of the restricted Birkhoff polytope. Of course, we also do not have a description of the vertices of the restricted Birkhoff polytope. \Cref{tab:facet_vertex_nums} records the number of facets and vertices of $\mathcal{B}_n^k$ for small values of $n$ and~$k$. It would be worthwhile to give an exact facet (or vertex) description of $\mathcal{B}_n^k$, but the data suggest the descriptions might not be so simple. Even in the potentially simplest case of $\mathcal{B}_n^2$, these sequences do not appear in the On-Line Encyclopedia of Integer Sequences (OEIS)~\cite{oeis}.
\end{remark}

\begin{table}
    \centering
   \begin{tabular}{c|c|c|c|c|c}
    $n$/$k$ & 1 & 2 & 3 & 4 & 5 \\ \hline
    1 & 1 \\ \cline{1-3}
    2 & 1 & 2 \\ \cline{1-4}
    3 & 1 & 9 & 9 \\ \cline{1-5}
    4 & 1 & 32 & 24 & 16 \\ \cline{1-6}
    5 & 1 & 91 & 77 & 41 & 25 \\
    \end{tabular} \qquad \qquad \begin{tabular}{c|c|c|c|c|c}
    $n$/$k$ & 1 & 2 & 3 & 4 & 5 \\ \hline
    1 & 1 \\ \cline{1-3}
    2 & 1 & 2 \\ \cline{1-4}
    3 & 1 & 6 & 6 \\ \cline{1-5}
    4 & 1 & 49 & 34 & 24 \\ \cline{1-6}
    5 & 1 & 3692 & 1232 & 187 & 120 \\
    \end{tabular} 
    \medskip
    \caption{The number of facets (left) and vertices (right) of the restricted Birkhoff polytope $\mathcal{B}_n^k$ for $1 \leq k \leq n \leq 5$.}
    \label{tab:facet_vertex_nums}
\end{table}

\begin{remark}
We can measure how ``surprising'' an instance of Ehrhart period collapse is by comparing how small the period of the quasi-polynomial is to how big the denominators of the vertices of the polytope are. In~\cite{deloera2004vertices}, De Loera and McAllister showed that the denominators for Gelfand--Tsetlin polytopes can be arbitrarily big. The denominators for restricted Birkhoff polytopes also grow unboundedly. We will sketch an argument for this assertion in the next paragraph.

For $n \geq 2$, we can show that the point $X\in \mathcal{B}_{2n+1}^{2n}$ given by $x_{i,i}=\frac{2n}{2n+1}$ and~$x_{i,i+n}=\frac{1}{2n+1}$ (with indices taken modulo $2n+1$) is a vertex. Indeed, if we write~$X=\frac{1}{2} Y+\frac{1}{2} Z$ for two other points $Y,Z\in \mathcal{B}_{2n+1}^{2n}$ then we have that $Y$ and~$Z$ have the same support as $X$. Together with the row and column sum condition, this implies that $Y$ and~$Z$ are constant along the diagonal. However, since $\sum y_{i,i}$ and $\sum z_{i,i}$ cannot exceed $2n$ we have $y_{1,1}\le \frac{2n}{2n+1}$ and $z_{1,1}\le \frac{2n}{2n+1}$. This tells us that $Y=Z=X$ and therefore $X$ is a vertex.

In the opposite direction, we could also ask for an \emph{upper} bound on the denominator of vertices of $\mathcal{B}_n^k$. Limited computational evidence suggests that $n$ could be such an upper bound, but we do not know why this should be the case.
\end{remark}

\begin{remark}
Let $\mathcal{P}$ be a $d$-dimensional integral polytope with Ehrhart polynomial $L(P;t) = c_d t^d + c_{d-1} t^{d-1} + \cdots + c_1 t + c_0$. There are geometric interpretations of the coefficients $c_d$ and $c_{d-1}$ which imply that they are positive (and trivially $c_0=1$), but other coefficients can be negative. The question of which polytopes $\mathcal{P}$ have all Ehrhart polynomial coefficients nonnegative has recently attracted a lot of attention. Such polytopes are called \dfn{Ehrhart positive}. See for instance the survey by Liu~\cite{liu2019positivity}, or~\cite[\S4]{ferroni2023examples}. Stanley conjectured~\cite[Conjecture 5.1.2]{liu2019positivity} that the Birkhoff polytope is Ehrhart positive, but this remains open.

For a rational polytope $\mathcal{P}$ with an Ehrhart quasi-polynomial which happens to be an actual polynomial, we can ask the same question about whether the coefficients of this polynomial are nonnegative. King, Tollu, and Toumazet~\cite{king2004stretched} conjectured that every Gelfand--Tsetlin polytope $\mathcal{GT}_{\lambda,\mu}$ is Ehrhart positive. In fact, they made a stronger positivity conjecture for stretched Littlewood-Richardson coefficient polynomials. These conjectures are open.

Ehrhart positivity for Gelfand--Tstelin polytopes would certainly imply Ehrhart positivity for the restricted Birkhoff polytopes. However, we caution that some prominent conjectures about Ehrhart positivity have very recently been resolved in the negative~\cite{ferroni2022matroids}. Indeed, data from small examples can often be misleading~\cite{alexandersson2019polytopes}. So we have no opinion on Ehrhart positivity for $\mathcal{B}_n^k$.
\end{remark}

\begin{remark}
The \dfn{permutohedron} $\mathcal{P}_n$ is the convex hull in $\mathbb{R}^n$ of all permutations of the vector $(1,2,\ldots,n)$. Like the Birkhoff polytope, it is another very widely studied polytope (see, e.g., \cite[Example~0.10]{ziegler1995lectures} and  \cite[\S9.3]{beck2015computing}). There is a canonical projection $\pi\colon \mathcal{B}_n\to \mathcal{P}_n$ which sends the permutation matrix $X_w$ to the one-line notation of $w$. It could be interesting to look at the image $\pi(\mathcal{B}_n^k)\subseteq \mathcal{P}_n$ of the restricted Birkhoff polytope under this projection. For instance, limited computational evidence suggests that $\pi(\mathcal{B}_n^k)$ might also exhibit Ehrhart period collapse.
\end{remark}

\begin{remark}
\dfn{Combinatorial mutation}~\cite{akhtar2012minkowski} is a certain operation on polytopes whose definition is motivated by the study of mirror symmetry in toric geometry. It is also closely related to mutation in cluster algebras. Combinatorial mutation of polytopes preserves things like the Ehrhart polynomial, although it is not an integral equivalence. In particular, Higashitani~\cite{higashitani2020two} recently showed that the transfer map between the order and chain polytopes of a poset is a series of combinatorial mutations. In the first version of this paper available online, we asked whether the piecewise-linear RSK bijection $\rho$ between $\mathcal{B}_n^k$ and $\mathcal{M}_n^k$ could also be realized as a series of combinatorial mutations. This was subsequently answered in the affirmative in the very recent paper~\cite{clarke2022restricted}. In~\cite{clarke2022restricted}, the authors also study further interesting combinatorial properties of the restricted Birkhoff polytopes and their relatives.
\end{remark}

\begin{remark}
\dfn{Rowmotion}~\cite{striker2012promotion,striker2017dynamical} is a certain invertible operator acting on the order filters (or equivalently, antichains) of any finite poset $P$. Piecewise-linear rowmotion is a piecewise-linear extension of rowmotion to the order polytope $\mathcal{O}(P)$~\cite{einstein2021combiantorial} (or equivalently, chain polytope $\mathcal{C}(P)$~\cite{joseph2019antichain}). For a generic $P$, piecewise-linear rowmotion will have infinite order, but Grinberg and Roby~\cite{grinberg2015iterative} showed that it has order $n+m$ when $P=[n]\times[m]$ is the product of two chains. 

The Stanley--Thomas word of an antichain of $[n]\times[m]$, or more generally, any point in $\mathcal{C}([n]\times[m])$, is a vector associated to this point which rotates under rowmotion~\cite{joseph2021birational}. The Stanley--Thomas word of a point in $\mathcal{C}([n]\times[m])$ is related to its row and column sums in a simple way, and consequently it is easy to see that $\frac{1}{2}\mathcal{B}_n^2 \subseteq \mathcal{C}([n]\times[n])$ is exactly the set of points whose Stanley--Thomas word is~$(\frac{1}{2},\frac{1}{2},\ldots,\frac{1}{2})$. This implies that piecewise-linear rowmotion restricts to an order $2n$ action on $\mathcal{B}_n^2$. Moreover, for any integer $t \geq 1$, it restricts to an order $2n$ action on the finite set $\mathcal{B}_n^2\cap\frac{1}{t}\mathbb{Z}^{n \times n}$. This action of piecewise-linear rowmotion on the $k=2$ restricted Birkhoff polytope and its rational points might be worth studying further.
\end{remark}

\begin{remark}
A popular theme in algebraic combinatorics over the past 30 years has been to take some combinatorial construction, like RSK or rowmotion, and realize it as a continuous, piecewise-linear map. Often the goal is then to further de-tropicalize to obtain a birational map defined by a subtraction-free rational expression (see, e.g.,~\cite{kirillov2001introduction, einstein2021combiantorial}). But hopefully we showed in this paper how these piecewise-linear combinatorial constructions can also be interesting just from the point of view of polytope theory. In addition to the aforementioned paper~\cite{clarke2022restricted}, another very recent paper which also studies applications to Ehrhart theory of these kind of piecewise-linear versions of combinatorial constructions is~\cite{johnson2022piecewise}.
\end{remark}

\bibliography{main}{}
\bibliographystyle{abbrv}

\end{document}